\theoremstyle{plain} 
\newcommand \reg {\mathrm {reg}}
\newcommand \Gin {\ensuremath{\mathrm{Gin}}}
\newcommand \In {\ensuremath{\mathrm{in}}}
\newcommand \Proj {\ensuremath{\mathrm{Proj}}}
\newcommand \red {\ensuremath{\mathrm{red}}}
\newcommand \mult {\ensuremath{\mathrm{mult}}}
\newcommand \p {\ensuremath{\mathbb{P}}}
\newcommand \lex {\ensuremath{\mathrm{GLex}}}
\newcommand \Tan {\ensuremath{\mathrm{Tan}}}
\theoremstyle{definition}
\newtheorem{Thm}{Theorem}[section]
\newtheorem{Defn}[Thm]{Definition}
\newtheorem{lem}[Thm]{Lemma}
\newtheorem{remk}[Thm]{Remark}
\newtheorem{Ex}[Thm]{Example}
\newtheorem{prop}[Thm]{Proposition}
\newtheorem{Cor}[Thm]{Corollary}
\begin{document}

\title[Generic Initial ideals of Singular Curves in Graded Lex Order.]{Generic Initial ideals of Singular Curves in Graded Lexicographic Order.}
\author[J.\ Ahn$^{1}$, S.\ Kwak$^{2}$ and Y.\ Song$^{*}$]{Jeaman Ahn${}^{1}$ , Sijong Kwak${}^{2}$ and YeongSeok Song${}^{*}$}
\address{ Mathematics Education, Kongju National University,
182 Sinkwan-dong Kongju-si Chungnam, 314-702, Korea}
\email{jeamanahn@kongju.ac.kr}
\thanks{${}^{1}$ The first author was supported in part by the National Research Foundation of Korea funded
by the Ministry of Education, Science and Technology(grant No. 2010-0025762)}
\address{Department of Mathematics, Korea Advanced Institute of Science and Technology,
373-1 Gusung-dong, Yusung-Gu, Daejeon, Korea}
\email{skwak@kaist.ac.kr}
\thanks{${}^{2}$ The second author was supported in part by the National Research Foundation of Korea funded
by the Ministry of Education, Science and Technology(grant No. 2010-0001652)}
\address{Department of Mathematics, Sogang University,
1 Sinsu-dong, Mapo-gu, Seoul, 121-742, Korea}
\email{lassei@kaist.ac.kr}
\thanks{${}^{*}$ Corresponding author.}

\begin{abstract}
In this paper, we are interested in the generic initial ideals of \textit{singular} projective curves
with respect to the graded lexicographic order. Let $C$ be a \textit{singular} irreducible projective curve of degree $d\geq 5$ with the arithmetic genus  $\rho_a(C)$ in $\p^r$ where $r\ge 3$. If $M(I_C)$ is the regularity of the lexicographic generic initial ideal of $I_C$ in a polynomial ring $k[x_0, \ldots, x_r]$ then we prove that $M(I_C)$ is $1+\binom{d-1}{2}-\rho_a(C)$ which is obtained
from the monomial
$$
x_{r-3}  x_{r-1}\,^{\binom{d-1}{2}-\rho_a(C)},
$$
provided that $\dim\Tan_p(C)=2$ for every singular point $p \in C$.
This number is equal to one plus the number of non-isomorphic points under a generic projection
of $C$ into $\p^2$.
Our result generalizes the work of J. Ahn \cite{A} for \textit{smooth} projective curves and  that of
A. Conca and J. Sidman \cite{CS} for \textit{smooth} complete intersection curves in $\p^3$.
The case of singular curves was motivated by \cite[Example 4.3]{CS} due to A. Conca and J. Sidman.
We also provide some illuminating examples of our results via calculations done with {\it Macaulay 2}
and \texttt {Singular} ~\cite{DGPS, GS}.
\end{abstract}

\maketitle
\tableofcontents \setcounter{page}{1}

\section{Introduction}\label{section_1}
Let $R=k[x_0, \ldots, x_r]$ be a polynomial ring over an algebraically closed field $k$ of characteristic zero and $I$ be a homogeneous ideal of $R$. If $X$ is a non-degenerate reduced closed subscheme in $\p^r$ we write
$I_X$ for the saturated defining ideal of $X$ in the polynomial ring $R$.

Bayer and Mumford in \cite{BM} introduced the {\it regularity} of the initial ideal of $I$ with respect to a term order $\tau$ as a measure of the complexity of computing Gr\"{o}bner bases. Even though this depends on the choice of coordinates, it is constant in generic coordinates by the result of Galligo \cite{Ga}. He has proved that the initial ideals of $I$ in generic coordinates are invariant, which is the so-called generic initial ideal of $I$ with respect to $\tau$, denoted by $\Gin_{\tau}(I)$. In characteristic zero, it was shown in \cite{BS} that the regularity of $\Gin_{\tau}(I)$ is exactly the maximum of the degrees of its minimal generators.

One of the important problems is to bound the regularity of the generic initial ideal of $I$ for a given term order $\tau$ on monomials. Two of the most commonly used orderings are the graded lexicographic ordering, and the graded reverse lexicographic ordering. Many people have studied generic initial ideals with respect to the reverse lexicographic term ordering, as these ideals have essentially
best-case complexity due to a result of Bayer and Stillman (for examples, \cite{BM, BS, BEL, C, Giaimo, GLP, K, K1, Laz, Pin, PS, Ran, SV}).
However, much less is known about the generic initial ideals with respect to the graded lexicographic term ordering. One expects them to require
many more generators than the reverse lexicographic initial ideals, but their precise behavior has been very little known (\cite{A, AKS, CS}).

In this paper, we continue the study of the lexicographical generic initial ideals of \textit{singular} projective curves. Our main result gives a relationship between the complexity of algebraic computations with the ideal of a singular curve and the geometry of its generic projection to the plane. It states that if $C$ is a \textit{singular} irreducible projective curve of degree $d\geq 5$ with the arithmetic genus  $\rho_a(C)$ in $\p^r$ where $r\ge 3$ then the regularity of the lexicographic generic initial ideal of a singular curve $C$ in projective space is precisely $1+\binom{d-1}{2}-\rho_a(C)$, which is one plus the number of non-isomorphic points under a generic projection
of $C$ into $\p^2$, provided that $\dim\Tan_p(C)=2$ for every singular point $p\in C$.
Moreover it turns out that the regularity is obtained from the monomial generator $ x_{r-3}  x_{r-1}\,^{\binom{d-1}{2}-\rho_a(C)}$ of $\Gin(I_C)$.

We uses M. Green's partial elimination ideals and careful work with their Hilbert functions to achieve the result, which previously has been used in \cite{A}. Main ideas employed in this paper are to reduce the problem to the case of singular curves in $\p^3$ and to show that the first partial elimination ideal of $I_C\subset K[x_0,x_1,x_2,x_3]$ is a radical ideal in generic coordinates, under the assumption that
$\dim\Tan_p(C)=2$ for every singular point $p \in C$. In process of the proof, this ideal turns out to be the defining ideal of the set of non-isomorphic points under a generic projection of $C$ into $\p^2$.

Our result generalizes the works of J. Ahn \cite{A} and A. Conca and J. Sidman \cite{CS} who proved the same formula for the case of \textit{smooth} projective curves and for \textit{smooth} complete intersection curves in $\p^3$, respectively.

Finally, we remark that our result is not true if $\dim \Tan_p (C) > 2$. The example of A. Conca and J. Sidman \cite[Example 4.3]{CS} is a complete intersection curve $C$ defined by
$x^3-yz^2$ and $y^3-z^2t$ with one singular point $p=[0,0,0,1]$. One can compute $\dim \mathrm{Tan}_p(C)=3$ and $\delta_p = 10$ with \texttt {Singular} \cite{DGPS}. In this case the regularity of the lexicographic generic initial ideal of $I_C$ is $16$, which is not $1+\binom{9 -1}{2}-\rho_a(C)=19$ (see Example 3.6 for the details).
\bigskip

{\bf Acknowledgements} We are very grateful to the anonymous referee for valuable and helpful suggestions.
In addition, {\it Macaulay 2} and \texttt {Singular} have been useful to us in computations of generic initial ideals
of partial elimination ideals and the delta invariant.


\section{Notations and known facts}\label{section_2}

\begin{itemize}
\item[(a)] We work over an algebraically closed field $k$ of characteristic zero.

\item[(b)] For a homogeneous ideal $I$, the Hilbert function of $R/I$ is defined by
$H(R/I, m):=\dim_k (R/I)_m$ for any non-negative integer $m$.
We denote its corresponding Hilbert polynomial by $P_{R/I}(z)\in \Bbb Q[z]$. If $I=I_X$
then we simply write $P_X(z)$ instead of $P_{R/I_X}(z)$.

\item[(c)] Given a homogeneous ideal $I \subset R$ and a term order $\tau$, there is a Zariski open subset
$U \subset GL_{r+1}(k)$ such that $\In_{\tau}(g(I))$ for $g \in U$ is constant. We will call $\In_{\tau}(g(I))$
 the generic initial ideal of $I$ for $g \in U$ and denote it by $\Gin_{\tau}(I)$. One can say that $I$ is in generic coordinates
 if $\In_{\tau}(I)=\Gin_{\tau}(I)$.

\item[(d)] For a homogeneous ideal $I\subset R$, let $M(I)$ denote the maximum of the degrees of minimal generators of $\Gin_{\mathrm{GLex}}(I)$.

\item[(e)] For a homogeneous ideal $I\subset R$, consider a minimal free resolution
$$\cdots \rightarrow \bigoplus_jR(-i-j)^{\beta_{i,j}(I)}\rightarrow\cdots\rightarrow\bigoplus_jR(-j)^{\beta_{0,j}(I)}\rightarrow I\rightarrow 0$$
of $I$ as a graded $R$-modules.
We say that $I$ is $m$-regular if $\beta_{i,j}(I)=0$ for all $i\geq 0$ and $j\geq m$.
The Castelnuovo-Mumford regularity of $I$ is defined by
\[\reg(I):=\min\{\,m\,\mid\, I \textup{ is } m \textup{-regular}\}.\]

\item[(f)] If $I$ is a Borel fixed monomial ideal then $\reg(I)$ is exactly the maximal degree of minimal generators of $I$ (see \cite{BS},\cite {G}). This implies that  $M(I)=\reg(\Gin_{\mathrm{GLex}}(I))$.

\item[(g)] Let $C$ be an integral projective scheme of dimension $1$ over $k$, and $f: \widetilde{C} \longrightarrow C$ be its normalization. We write $\delta_p$ for the length of $(f_{*} \mathcal{O}_{\widetilde C})_p / \mathcal{O}_{C,p}$ as an $\mathcal{O}_{C,p}$-module for each $p \in C$.
     Note that if a singular point $p$ is a node or an ordinary cusp then $\delta_p = 1$.\cite[Exercise IV 1.8(c)]{H}
\end{itemize}
\bigskip

We recall some definitions and known facts which will be used
throughout the remaining parts of the paper. Unless otherwise stated, we always assume the graded lexicographic term ordering.

\begin{Thm}\cite[Theorem 1.2]{A}
Let $X$ be an integral scheme in $\p^r$ and let $\pi$ be a generic projection of $X$ to $\p^{r-1}$.
Suppose that $\pi$ is an isomorphism. Then $M(I_X)=M(I_{\pi(X)})$.
\end{Thm}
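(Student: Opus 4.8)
The plan is to reduce the problem, via M.\ Green's partial elimination ideals, to a comparison of the Hilbert functions of $X$ and of $\pi(X)$, exploiting the fact that an \emph{isomorphic} projection forces all higher partial elimination ideals to be Artinian.

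First I would choose coordinates, generic for $I_X$ (so that $\In_{\lex}(I_X)=\Gin_{\lex}(I_X)$), in which $\pi$ is the projection away from $e_0=[1:0:\cdots:0]$; write $\overline R=k[x_1,\dots,x_r]$, $\overline X=\pi(X)\subseteq\p^{r-1}=\Proj\overline R$, and let $P_d(I_X)\subseteq\overline R$ be the $d$-th partial elimination ideal of $I_X$. By Green's structure theorem, in generic coordinates the monomials of $\Gin_{\lex}(I_X)$ in which $x_0$ occurs to the exact power $d$ are precisely $x_0^{d}$ times the monomials of $\Gin_{\lex}(P_d(I_X))$ (formed in $\overline R$), these ideals form an increasing chain of Borel-fixed ideals, and $H(R/I_X,n)=\sum_{d\ge0}H(\overline R/P_d(I_X),\,n-d)$; hence every minimal generator of $\Gin_{\lex}(I_X)$ has the shape $x_0^{d}m$ with $x_0\nmid m$, with $m$ a minimal generator of $\Gin_{\lex}(P_d(I_X))$ and, for $d\ge1$, $m\notin\Gin_{\lex}(P_{d-1}(I_X))$, so that $M(I_X)=\max\{\,d+\deg m\,\}$ over these generators. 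Since $e_0\notin X$ and $\pi$ is an isomorphism onto $\overline X\cong X$, the elimination ideal $P_0(I_X)=I_X\cap\overline R$ is the saturated defining ideal $I_{\overline X}$ of $\overline X$; as the monomials of $\Gin_{\lex}(I_X)$ free of $x_0$ are exactly those of $\Gin_{\lex}(I_{\overline X})$, each minimal generator of $\Gin_{\lex}(I_{\overline X})$ is a minimal generator of $\Gin_{\lex}(I_X)$, so already $M(I_X)\ge\reg(\Gin_{\lex}(I_{\overline X}))=M(I_{\overline X})$.

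For the reverse inequality I would show that the levels $d\ge1$ contribute no generator of degree exceeding $M(I_{\overline X})$. This is where the hypothesis enters: since $\pi$ is an isomorphism, the line joining $e_0$ to any point of $\p^{r-1}$ meets $X$ in a scheme of length at most $1$, so $\Proj(\overline R/P_d(I_X))=\varnothing$ for all $d\ge1$; hence, for such $d$, either $P_d(I_X)=\overline R$ --- contributing only the generator $x_0^{a}$, where $a$ is the least degree of a nonzero form of $I_X$, which is harmless because $a$ is at most the least degree of a form of $I_{\overline X}$ (as $I_{\overline X}\subseteq I_X$), hence at most $M(I_{\overline X})$ --- or $\overline R/P_d(I_X)$ is a nonzero Artinian ring whose top nonvanishing degree $\sigma_d$ satisfies $\reg(\Gin_{\lex}(P_d(I_X)))=\sigma_d+1$. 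Suppose, for contradiction, that $M(I_X)>M(I_{\overline X})$; then $M(I_X)$ is attained by a generator $x_0^{d^{*}}m^{*}$ with $d^{*}\ge1$ of the Artinian type, so $\deg m^{*}\le\sigma_{d^{*}}+1$. Setting $\phi(n):=H(R/I_X,n)-H(\overline R/I_{\overline X},n)$, the identity above and $P_0(I_X)=I_{\overline X}$ give $\phi(n)=\sum_{d\ge1}H(\overline R/P_d(I_X),\,n-d)\ge0$, whence $\phi(\sigma_{d^{*}}+d^{*})\ge1$; but $\phi$ vanishes for $n\ge\max(\reg(I_X),\reg(I_{\overline X}))-1$, since an isomorphic projection preserves Hilbert polynomials ($P_X=P_{\overline X}$) and $I_X$, $I_{\overline X}$ are saturated. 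Therefore $M(I_X)=d^{*}+\deg m^{*}\le\sigma_{d^{*}}+d^{*}+1\le\max(\reg(I_X),\reg(I_{\overline X}))-1$, and since $\reg(I_X)\le M(I_X)$ and $\reg(I_{\overline X})\le M(I_{\overline X})$ by item (f), each of the cases $\reg(I_X)\ge\reg(I_{\overline X})$ and $\reg(I_X)<\reg(I_{\overline X})$ yields a contradiction. Hence $M(I_X)\le M(I_{\overline X})$, and equality holds.

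The step I expect to be the main obstacle is the geometric input behind the reverse inequality: making precise, in generic coordinates, the identifications $P_0(I_X)=I_{\overline X}$ and $\Proj(\overline R/P_d(I_X))=\varnothing$ for $d\ge1$ --- that is, interpreting Green's partial elimination ideals of $I_X$ as the ideals of the multisecant loci of $X$ through the centre of projection, so that ``$\pi$ is an isomorphism'' translates exactly into the emptiness of those loci --- together with the preservation $P_X=P_{\overline X}$ of Hilbert polynomials. Everything else is bookkeeping with the Hilbert-function identity and the elementary fact that the Castelnuovo--Mumford regularity of an Artinian graded quotient of $\overline R$ equals its top nonvanishing degree.
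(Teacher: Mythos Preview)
The paper does not prove this statement at all: it is quoted verbatim from \cite[Theorem~1.2]{A} as a known fact in Section~\ref{section_2}, with no accompanying argument. So there is no ``paper's own proof'' to compare against.

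That said, your argument is correct and is very much in the spirit of the machinery the paper assembles for its other results. You use exactly the Hilbert-function decomposition $H(R/I_X,m)=\sum_{i\ge0}H(\bar R/K_i(I_X),m-i)$ that the paper records (equation~(\ref{eq:HF})), the identification of minimal generators of $\Gin(I_X)$ level-by-level via Proposition~\ref{thm2} and Corollary~\ref{cor:305}, and the geometric interpretation of $K_i$ from Proposition~\ref{defset} to conclude that an isomorphic projection forces $\bar R/K_i(I_X)$ to be Artinian for $i\ge1$. The one step worth spelling out more carefully when you write this up is the equality $\reg(\Gin_{\lex}(K_d(I_X)))=\sigma_d+1$ in the Artinian case: it holds because $\bar R/K_d(I_X)$ and $\bar R/\Gin(K_d(I_X))$ share the same Hilbert function, so $\Gin(K_d(I_X))$ contains every monomial of degree $\sigma_d+1$ and hence has no minimal generator of larger degree; combined with $\reg(\Gin(J))\ge\reg(J)$ this gives equality. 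Your contradiction via $\phi(n)=H(R/I_X,n)-H(\bar R/I_{\bar X},n)$ and the bound $\reg(I)\le\reg(\Gin_{\lex}(I))=M(I)$ then goes through cleanly.

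The ``main obstacle'' you flag --- that $K_0(I_X)=I_{\bar X}$ is saturated and that the higher $K_d$ define empty schemes --- is precisely what Proposition~\ref{defset} (Green's result) provides once the projection centre avoids $X$ and the fibres have length at most one; this is not a gap, just a citation to be made explicit.
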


\begin{Defn}\cite{CS, G}\label{defpartial}
Let $I$ be a homogeneous ideal in $R=k[x_0, \ldots, x_r]$. If $f \in I_d$ has leading term $\In(f)=x_{0}^{d_{0}} \cdots x_{r}^{d_{r}}$,
we will set $d_{0}(f)=d_0$, the leading power of $x_0$ in $f$. We let
\begin{equation*}
\widetilde{K}_{i}(I)= \bigoplus_{d \geq 0} \{f \in I_{d} \mid  d_{0}(f) \leq i  \}.
\end{equation*}
If $f \in \widetilde{K}_{i}(I)$, we may write uniquely $f=x_{0}^{i}\overline{f} + g$,
where $d_{0}(g) < i$. Now we define $K_{i}(I)$ as the image of $\widetilde{K}_{i}(I)$ in $\bar{R}=k[x_1 \ldots x_r]$
under the map $f\rightarrow \overline{f}$ and we call $K_{i}(I)$ the $i$-th partial elimination ideal of $I$ .
\end{Defn}

\begin{remk}
We have an inclusion of the partial elimination ideals of $I$:
$$ I\cap\bar{R}= K_0(I)\subset K_1(I)\subset \cdots \subset K_i(I)\subset  K_{i+1}(I)\subset\cdots \subset \bar{R}=k[x_1 \ldots x_r].$$
Note that if $I$ is in generic coordinates and $i_0=\min\{i \mid I_i \neq 0 \}$ then $K_{i}(I)=\bar{R}$
for all $i\geq i_0$.
\end{remk}

The following result gives the precise relationship between partial elimination ideals and the geometry of the projection map from $\p^r$
to $\p^{r-1}$. For a proof of this proposition, see \cite[Propostion 6.2]{G}.

\begin{prop}\label{defset}
Let $X \subset \p^{r}$ be a reduced closed subscheme and let $I_X$ be the defining ideal of $X$.
Suppose $p= [1,0, \ldots ,0 ] \in \p^r \setminus X$ and that $\pi : X \rightarrow \p^{r-1}$ is the projection from
the point $p \in \p^r$ to $x_{0} = 0$. Then, the radical ideal $\sqrt{K_{i}(I_{X})}$ defines the algebraic set
$\{ q \in \pi(X) \mid \mult_{q}(\pi(X))> i \}$ set-theoretically.
\end{prop}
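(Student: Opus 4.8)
The plan is to reduce the statement to the classical description of fibers of a projection and then glue the local multiplicity data together. The key point to isolate first is the scheme-theoretic meaning of $K_i(I_X)$. Writing $S = k[x_1,\dots,x_r]$ and viewing the projection $\pi$ as coming from the ring map $S \hookrightarrow R$, one checks that $\widetilde K_i(I_X)$ is the preimage in each degree of the piece of $I_X$ whose elements have $x_0$-degree at most $i$, and that passing to leading coefficients in $x_0$ produces, in the limit $i \to \infty$, exactly the elimination ideal $I_X \cap S$ defining $\overline{\pi(X)} = \pi(X)$ (the last equality because $X$ is closed and $p \notin X$, so $\pi$ is a finite, hence closed, morphism). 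For finite $i$ the ideal $K_i(I_X)$ is the ideal generated by the ``$x_0$-leading forms'' $\overline f$ of those $f \in I_X$ with $d_0(f) \le i$. So $K_i(I_X) \supseteq I_{\pi(X)}$ always, and the content of the proposition is a pointwise statement about how much bigger it is.

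Next I would localize at a point $q \in \pi(X)$ and analyze the fiber $\pi^{-1}(q)$. Choosing coordinates so that $q = [1,0,\dots,0]$ in $\p^{r-1}$ (abusing notation), the fiber $\pi^{-1}(q)$ is a finite subscheme of the line $\ell = \overline{pq} \cong \p^1$, cut out on $\ell$ by the restriction of $I_X$; parametrizing $\ell$ by $x_0$ and declaring $x_0$ the variable being eliminated, the fiber is defined by a principal ideal generated by a polynomial in $x_0$ of degree equal to $\mathrm{length}(\pi^{-1}(q)) = \mult_q(\pi(X))$ when $\pi$ is finite and $X$ reduced. The crucial computation is then: for $f \in I_X$, the vanishing of $\overline f$ at $q$ is equivalent to $d_0(f) < d_0(f|_\ell)$, i.e. to the drop in $x_0$-degree upon restriction to $\ell$; and running over all $f \in \widetilde K_i(I_X)$, the locus where every such $\overline f$ vanishes at $q$ is precisely the locus where no element of $I_X$ of $x_0$-degree $\le i$ restricts to a nonzero polynomial in $x_0$ of degree $> \deg - (\text{something})$ — more cleanly, it is the locus where the fiber polynomial on $\ell$ cannot be realized with $x_0$-leading term of degree $\le i$, which happens exactly when $\mathrm{length}(\pi^{-1}(q)) > i$. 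This is essentially the argument in \cite[Proposition 6.2]{G}, and I would organize it as: (i) $q \notin V(K_i(I_X))$ iff some $\overline f$, $f \in \widetilde K_i(I_X)$, is nonzero at $q$; (ii) such an $f$ exists iff $I_X$ contains an element whose restriction to $\ell$ is nonzero of $x_0$-degree exactly $d_0(f) \le i$; (iii) this forces $\mathrm{length}(\pi^{-1}(q)) \le i$; and conversely (iv) if $\mathrm{length}(\pi^{-1}(q)) \le i$, the generator of the fiber ideal on $\ell$ lifts to some $f \in I_X$ realizing the bound, so $q \notin V(K_i(I_X))$. Taking radicals, $\sqrt{K_i(I_X)}$ is the ideal of $\{q \in \pi(X) : \mult_q(\pi(X)) > i\}$ set-theoretically.

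The main obstacle will be step (iv), the lifting: given the monic degree-$m$ polynomial in $x_0$ that cuts out the fiber on $\ell$, one must produce an honest element of the \emph{homogeneous} ideal $I_X \subset R$ whose $x_0$-leading exponent is $\le i$ and whose restriction to $\ell$ recovers that polynomial (up to a unit near $q$). This requires a Nakayama-type or flatness argument: localize $R/I_X$ at the line $\ell$, use that $\pi$ is finite so $(R/I_X)_\ell$ is finite over the local ring of $\ell$ at $q$, and choose a homogeneous element reducing mod the maximal ideal to the fiber generator; one then homogenizes and clears denominators, checking the $x_0$-degree bookkeeping is not spoiled. A secondary subtlety is that partial elimination ideals need not be saturated, which is exactly why the statement is only set-theoretic and why the radical appears; I would flag this explicitly rather than attempt an ideal-theoretic refinement. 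Since the paper cites \cite[Proposition 6.2]{G} for the proof, I would present the above as a guided recollection of Green's argument, emphasizing the reduction to the one-dimensional fiber computation and the degree-of-the-fiber-polynomial $=$ $\mult_q$ dictionary.
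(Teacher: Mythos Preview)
The paper does not supply its own proof of this proposition: immediately before the statement it writes ``For a proof of this proposition, see \cite[Proposition 6.2]{G}'' and moves on. So there is no in-paper argument to compare against; your proposal already goes further than the paper does by sketching the content of Green's proof, and you explicitly acknowledge this at the end.

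Your outline is the correct reconstruction of Green's argument --- reduce to the line $\ell = \overline{pq}$, identify $\mult_q$ with the degree of the fiber polynomial on $\ell$, and match this against the minimal $d_0(f)$ over $f \in I_X$ with $\overline f(q) \ne 0$ --- and you correctly isolate the lifting step (iv) as the only place requiring real care. One slip to fix: your claim that ``in the limit $i \to \infty$'' the $K_i$ recover the elimination ideal $I_X \cap S$ is backwards. It is $K_0(I_X) = I_X \cap \bar R$ that equals the elimination ideal, while for $i$ large one has $K_i(I_X) = \bar R$ (the paper notes this in the remark following Definition~\ref{defpartial}). This does not affect the body of your argument, since you only use the containment $K_i(I_X) \supseteq I_{\pi(X)}$, which follows from $K_i \supseteq K_0$. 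A second minor wording issue: $\widetilde K_i(I_X)$ \emph{is} the set of $f \in I_X$ with $d_0(f) \le i$, not a ``preimage'' of it.
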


Thus, we can define the following two projective schemes associated with the partial elimination ideals:
\begin{equation*}
Y_{i}(X):=\Proj(\bar{R}/\sqrt{K_{i}(I_{X}))}\subset Z_{i}(X):=\Proj(\bar{R}/K_{i}(I_{X})).
\end{equation*}
It is clear that $Z_{i}(X)_{\red}=Y_i(X)$ and if ${K_{i}(I_{X})}$ is reduced, then $Y_{i}(X)=Z_i(X)$.
\bigskip

It is natural to ask what is a Gr\"{o}bner basis of $K_i(I)$? Recall that any non-zero polyomial $f$ in $R$
can be uniquely written as $f=x^t\bar f+g$ where $d_0(g)<t$. A. Conca and J. Sidman \cite{CS} show that
if $G$ is a Gr\"{o}bner basis for an ideal $I$ then the set
$$
G_i=\{ \bar{f} \mid f\in G \mbox{ with } d_0(f)\leq i\}
$$
is a Gr\"{o}bner basis for $K_i(I)$. However if $I$ is in generic coordinates then there is a more refined
Gr\"{o}bner basis for $K_i(I)$, which plays an important role in this paper.
For lack of reference, we give a proof of the following Proposition.
\begin{prop}\label{thm2}
Let $I$ be a homogeneous ideal {\it in generic coordinates} and $G$ be a Gr\"{o}bner basis for $I$ with respect to the graded lexicographic order.
Then, for each $i\geq 0$,
\begin{itemize}
\item[(a)] the $i$-th partial elimination ideal $K_i(I)$ is in generic coordinates;
\item[(b)] $G_i = \{ \bar{f} \mid f \in G ~\mbox{with} ~d_0 (f) = i\}$
is a Gr\"{o}bner basis for $K_i(I)$.
\end{itemize}
\end{prop}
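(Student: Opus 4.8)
The plan is to prove both statements together by analyzing how the Gr\"{o}bner basis $G$ of $I$ interacts with the elimination of $x_0$, exploiting crucially that $I$ is already in generic coordinates so that $\In(I) = \Gin_{\lex}(I)$ is Borel-fixed. For part (b), I would start from the Conca--Sidman description recalled just above: the set $\{\bar f \mid f \in G,\ d_0(f) \le i\}$ is a Gr\"{o}bner basis for $K_i(I)$. The task is then to show that the polynomials with $d_0(f) < i$ are redundant, i.e. their leading terms lie in the ideal generated by leading terms of the $\bar f$ with $d_0(f) = i$. Here I would use the Borel-fixed property of $\In(I)$: if $m = x_0^{a} x_1^{a_1}\cdots x_r^{a_r}$ is a minimal generator of $\In(I)$ with $a < i$, then $m$ is not in $x_0 \cdot \In(I)$, and Borel-fixedness lets me exchange $x_0$ for a higher-index variable, producing monomials $x_0^{a'} x_j \cdots$ with $a' \ge a$ still in $\In(I)$; iterating, one reaches a minimal generator with $x_0$-degree exactly $i$ whose image under bar divides $\bar m$. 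Making this exchange argument precise — tracking that one genuinely lands on $x_0$-degree $=i$ and that divisibility of leading terms is preserved under $f \mapsto \bar f$ — is where the real bookkeeping lives, and I expect this to be the main obstacle.

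For part (a), I would argue that $K_i(I)$ is in generic coordinates by showing $\In_{\lex}(K_i(I))$ computed in the given coordinates already equals $\Gin_{\lex}(K_i(I))$, equivalently that $\In_{\lex}(K_i(I))$ is Borel-fixed. By part (b), $\In_{\lex}(K_i(I))$ is generated by the monomials $\In(\bar f) = \overline{\In(f)}$ for $f \in G$ with $d_0(f) = i$ — that is, it is obtained from the degree-$i$-in-$x_0$ part of the Borel-fixed monomial ideal $\In(I) \subset \bar R[x_0]$ by setting $x_0 = 1$ (equivalently, extracting the coefficient ideal of $x_0^i$). So the statement reduces to the purely combinatorial fact: if $J \subset k[x_0,\dots,x_r]$ is Borel-fixed, then the $i$-th ``coefficient ideal'' $(J : x_0^i) \cap \bar R$, or more precisely the ideal generated by $\{\bar m : m \in J,\ \deg_{x_0} m = i\}$, is Borel-fixed in $\bar R = k[x_1,\dots,x_r]$. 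This follows by checking the exchange property $x_j \mapsto x_{j'}$ for $1 \le j' < j$ directly on monomials, using that the corresponding exchange in $J$ (which never involves $x_0$) stays inside $J$.

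Alternatively — and this may be cleaner — I would prove (a) before (b) by a direct generic-coordinates argument: a general $g \in GL_{r+1}$ fixing $p = [1,0,\dots,0]$ acts on $\bar R$ as a general element of $GL_r$, and one shows $K_i(g \cdot I) = \bar g \cdot K_i(I)$ up to the action, so that if $I$ is generic then $K_i(I)$ inherits genericity; the content is that forming partial elimination ideals commutes with the relevant group action, which is essentially Green's setup. With (a) in hand, (b) becomes: the generators of $K_i(I)$ coming from $f \in G$ with $d_0(f) < i$ contribute nothing new to $\Gin_{\lex}(K_i(I)) = \In_{\lex}(K_i(I))$, which again reduces to the Borel-exchange argument above. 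Either way, the heart of the matter is the combinatorial lemma on Borel-fixed ideals, and once that is isolated the rest is formal; I would state and prove that lemma first, then deduce (a) and (b).
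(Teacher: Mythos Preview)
Your plan is essentially the paper's. For (a) the paper simply cites Conca--Sidman \cite{CS}, which is your ``approach 2''. For (b) the paper argues slightly differently --- rather than starting from the Conca--Sidman Gr\"obner basis $\{\bar f : d_0(f)\le i\}$ and pruning, it proves directly the bijection
\[
x_0^i M \in \mathcal G(\In(I)) \quad\Longleftrightarrow\quad M \in \mathcal G(\In(K_i(I))),
\]
from which (b) follows at once. But the substantive step in both versions is the same Borel exchange on the Borel-fixed ideal $\In(I)$: given a minimal generator with $x_0$-degree $<i$, trade a factor $x_j$ ($j\ge 1$) for $x_0$ to climb to $x_0$-degree $i$ while dividing the original monomial. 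Your acknowledgment that the bookkeeping (``landing exactly at $x_0$-degree $=i$'') is the crux is accurate; the paper handles it via the minimal-generator bijection, which makes that step clean.

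Two points to fix. First, your ``approach 1'' for (a) asserts that showing $\In_{\lex}(K_i(I))$ is Borel-fixed is \emph{equivalent} to $K_i(I)$ being in generic coordinates. Only one implication is standard: generic coordinates implies the initial ideal is Borel-fixed. The converse is not automatic --- an ideal can sit in a non-generic Schubert stratum of its $GL$-orbit and still have a Borel-fixed initial ideal --- so this route would need an additional argument you have not supplied. Your approach 2 (the group-action / Conca--Sidman argument) avoids this entirely and is what the paper uses. Second, a slip of direction: you write ``exchange $x_0$ for a higher-index variable, producing monomials $x_0^{a'}\cdots$ with $a'\ge a$''. Borel-fixedness goes the other way --- it lets you replace $x_j$ ($j\ge 1$) \emph{by} $x_0$, which is indeed what raises the $x_0$-degree; the conclusion you draw is right, but the sentence describing the move is backwards.
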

\begin{proof}
(a) is in fact proved in Proposition~3.3 in \cite{CS}.
For a proof of (b),  it suffices to show that $\langle \In(G_i)\rangle = \In(K_i (I))$
by the definition of Gr\"obner bases. Since $G_i \subset K_i (I)$,
we only need to show that $\langle \In(G_i) \rangle \supset \In(K_i (I))$.
Now, we denote $\mathcal{G}(I)$ by the set of minimal generators of $I$.
Let $m \in \In(K_i (I))$ be a monomial. Then there is a monomial generator $M \in \mathcal{G}(\In(K_i (I)))$
such that $M$ divide $m$.

We claim that $x_0^i M \in \mathcal{G}(\In(I))$ if and only if $ M \in \mathcal{G}(\In(K_i (I))).$

If the claim is proved then we will be done. Indeed, for $ M \in \mathcal{G}(\In(K_i (I)))$,
we see that $x_0^iM \in \mathcal{G}(\In(I))$. This implies that there exists a polynomial
$f = x_0^i \bar{f} +g \in G$  with $d_0(g) < i$ such that
$$\In(f)=x_0^i\In(\bar f)=x_0^i M.$$
This means that $M=\In(\bar f) \in \langle \In(G_i) \rangle$. Thus we have $m \in \langle \In(G_i) \rangle$.\\
Here is a proof of the claim: suppose that $x_0^i M \in \mathcal{G}(\In(I))$ then we can say that $x_0^i M \in \In(I)$.
Thus there is a polynomial $f =x_0^i \bar{f} +g \in I$ such that $d_0(g) < i$ and $\In(f) =x_0^i \In(\bar f) = x_0^i M$.
By the definition of partial elimination ideals, we have that $\bar{f} \in K_i(I)$, which means $M \in \In(K_i(I))$.
Assume that $M \notin \mathcal{G}(\In(K_i(I)))$. Then for some monomial $N \in \mathcal{G}(\In(K_i(I)))$ such that
$N$ divide $M$.
This implies that
$$
x_0^i N \in \In(I) ~\mbox{and}~ x_0^i N \mid x_0^i M,
$$
which contradicts the fact that $x_0^i M$ is a minimal generator of $\In(I)$.
Thus $M$ is contained in $\mathcal{G}(\In(K_i(I)))$.

Conversely, suppose that there is $M \in \mathcal{G}(\In(K_i(I)))$ such that $x_0^i M \notin \mathcal{G}(\In(I))$.
Then we may choose a monomial $x_0^j N \in \mathcal{G}(\In(I))$ satisfying
\begin{equation}\label{eq:000001}
x_0 \nmid N ~\mbox{and}~x_0^j N \mid x_0^i M.
\end{equation}
Note that (\ref{eq:000001}) implies that $i \geq j \geq 0$.
Since $N \in \In(K_j(I))$ and $K_0(I) \subset K_1(I) \subset \cdots$, it is obvious that
$N \in \In(K_i(I))$ and $N$ divides $M$.
Now, we claim that $N$ can be chosen to be different from $M$. If $N = M$ then $j$ must be less than $i$.
Denote $N$ by $x_1^{j_1} \cdots x_r^{j_r}$ and choose $j_t\neq 0$. By (a), note that $K_{i}(I)$ is
in generic coordinates and so we may assume that $\In(K_i(I))$ has the Borel-fixed property.
Therefore, if we set
$N^{'}=N/x_{j_t}$ then $x_0^{j+1} N^{'} \in \In(I)$.
Replace $x_0^j N$ by $N^{''} =x_0^{j+1} N^{'}$. Then $N^{'} \in \In(K_{j+1}(I))$.
Since $j+1\leq i$, we can say that $N^{'} \in \In(K_i(I))$ and $N^{'}$ divides $M$ with $N^{'} \neq M$.
This contradicts the assumption that $M \in \mathcal{G}(\In(K_i(I)))$.
\end{proof}

We have the following immediate Corollary from Proposition~\ref{thm2}.
\begin{Cor}\label{cor:305}
For a homogeneous ideal $I\subset R=k[x_0,\ldots, x_r]$ {\it in generic coordinates}, we have
\[M(I)=\max \{ M(K_i(I))+i~ \mid~ 0\leq i\leq \beta\},\]
where $\beta=\min\{j ~\mid ~I_{j}\neq 0 \}$.
\end{Cor}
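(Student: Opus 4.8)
The plan is to read off both sides of the asserted identity directly from the minimal monomial generators of $\In_{\mathrm{GLex}}(I)$. Since $I$ is in generic coordinates, $\In(I)=\Gin_{\mathrm{GLex}}(I)$ is a Borel-fixed monomial ideal, so by item~(f) of Section~\ref{section_2} the number $M(I)$ is exactly the largest degree of a minimal generator of $\In(I)$. Every minimal generator $u$ of $\In(I)$ can be written uniquely as $u=x_0^{\,d_0(u)}\bar u$ with $x_0\nmid\bar u$, and I will sort the minimal generators of $\In(I)$ according to the value $i:=d_0(u)$ and match each group against the minimal generators of $\In(K_i(I))$.

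The first step is to pin down the range of $i$. Because $I_\beta\neq0$ and $\In(I)$ is Borel-fixed, successively replacing the highest-index variable occurring in a monomial of $\In(I)_\beta$ by $x_0$ produces $x_0^\beta\in\In(I)$; since $\In(I)$ vanishes in all degrees below $\beta$, the monomial $x_0^\beta$ is in fact a minimal generator. Hence no minimal generator of $\In(I)$ has $x_0$-degree greater than $\beta$ (it would be a proper multiple of $x_0^\beta$), and the unique minimal generator of $x_0$-degree exactly $\beta$ is $x_0^\beta$ itself. As $K_\beta(I)=\bar R$ by the Remark following Definition~\ref{defpartial}, and $M(\bar R)=0$ (its only minimal generator is the constant $1$), this generator accounts precisely for the term $i+M(K_i(I))$ at $i=\beta$.

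The second step handles $0\le i<\beta$. Here Proposition~\ref{thm2}(a) gives that $K_i(I)$ is in generic coordinates, so by item~(f) applied to $K_i(I)$ the number $M(K_i(I))$ is the top degree of a minimal generator of $\In(K_i(I))$; and the claim proved inside Proposition~\ref{thm2} provides a degree-shifting bijection $u\mapsto\bar u$ between the minimal generators of $\In(I)$ of $x_0$-degree $i$ and the minimal generators of $\In(K_i(I))$. Note that $i<\beta$ forces $K_i(I)\subsetneq\bar R$ (otherwise $1\in K_i(I)$ would produce an element of $I_i$, forcing $i\ge\beta$), so those generators are nonconstant and the claim's argument applies verbatim. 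Since $\deg u=i+\deg\bar u$ under the bijection, the largest degree of a minimal generator of $\In(I)$ with $x_0$-degree $i$ equals $i+M(K_i(I))$, with the index $i$ contributing nothing when $K_i(I)=0$. Taking the maximum over $0\le i\le\beta$ and using that every minimal generator of $\In(I)$ lies in exactly one such group then yields $M(I)=\max\{M(K_i(I))+i\mid 0\le i\le\beta\}$.

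I do not anticipate a serious obstacle: this corollary is essentially a bookkeeping consequence of Proposition~\ref{thm2}. The one point that genuinely needs care is the degenerate case $K_i(I)=\bar R$, which occurs exactly for $i\ge\beta$: the claim embedded in the proof of Proposition~\ref{thm2} tacitly assumes the minimal generator of $\In(K_i(I))$ is nonconstant (its argument selects a variable dividing it), so the boundary value $i=\beta$ must be dealt with by hand as in the first step, and this is precisely why the index range in the statement stops at $\beta$ rather than running over all $i$.
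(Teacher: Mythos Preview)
Your argument is correct and is exactly the unpacking the paper has in mind when it calls this an ``immediate Corollary from Proposition~\ref{thm2}'': you use the claim inside that proof to identify, for each $i$, the minimal generators of $\In(I)$ with $x_0$-degree $i$ with those of $\In(K_i(I))$, and then take the maximum degree. Your explicit handling of the boundary case $i=\beta$ (where $K_\beta(I)=\bar R$ and the claim's variable-selection step would break down) is a nice touch that the paper leaves implicit.
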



\section{Generic Initial Ideals of Singular Curves.}\label{section_3}
As mentioned in the introduction, $M(I_{C})$ can be computed precisely in terms of degree and genus
for a {\it smooth} integral curve $C$ in $\p^{r}$, $r \geq 3$.
In this section, we generalize the results for smooth curves in \cite{A} to non-degenerate {\it singular} curves in
$\p^{r}$, $r \geq 3$. We are motivated by \cite[Example 4.3]{CS} due to A. Conca and J. Sidman.
\begin{remk}\label{remk}
We will use the following well known facts to prove our main results.
\begin{enumerate}
\item[(a)](Trisecant Lemma)\label{trieq} Let $C$ be a reduced, irreducible curve in $\p^r$ where $r \geq 3$.
There are at most 1-dimensional trisecant lines to $C$,
which is equivalent to the assertion that not every pair of points of $C$ lie on a trisecant line (see ~\cite{ACGH}).

\item[(b)] Let $C$ be an integral curve in $\p^r$, $r \geq 3$, and $\dim \mathrm{Tan_p (C)}=2$ for any $p \in \mathrm{Sing(C)}$.
Then we can choose a generic point $q \notin \mathrm{Tan_p(C)}$ such that $\pi_q : C \longrightarrow \p^{r-1}$ is an isomorphic projection.
Furthermore, $M(I_C)=M(I_{\pi_q (C)})$.
\end{enumerate}
\end{remk}

From now on, we consider the Hilbert functions of two subschemes $Y_i(C)\subset Z_i(C) \subset \p^2$ associated to the partial elimination ideals
$K_i (I_C)$, $i=0,1$ for a singular projective curve $C$.

\begin{lem}\label{lem3.1}
Let $I_C \subset k[x_0, \ldots ,x_3]$ be a defining ideal of an integral, possibly singular, curve $C$ in $\p^3$.
Then $\deg(\bar{R}/K_1 (I_C))=\binom{d-1}{2}-\rho_a(C)$.
\end{lem}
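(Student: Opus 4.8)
The strategy is to compute the degree of $\bar R/K_1(I_C)$ by comparing Hilbert polynomials of the zeroth and first partial elimination ideals under a generic projection $\pi = \pi_q : C \to \p^2$ from a generic point $q \in \p^3$. Recall that $\pi$ is a birational morphism onto its image $C' = \pi(C)$, a plane curve of degree $d$, because $q$ lies on only finitely many secant lines (Trisecant Lemma) and avoids the finitely many tangent lines at singular points. The key identity to exploit is the classical short exact sequence relating the structure sheaves: pushing forward along $\pi$, one has $0 \to \mathcal{O}_{C'} \to \pi_*\mathcal{O}_C \to \mathcal{F} \to 0$ where $\mathcal F$ is a sheaf supported on the finitely many points of $C'$ where $\pi$ fails to be an isomorphism, and $\ell(\mathcal F) = \sum_{q' \in C'} (\delta_{q'}(C') - \sum_{p \mapsto q'} \delta_p(C))$ measures the extra singularity created by the projection. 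Taking Euler characteristics gives $\rho_a(C') - \rho_a(C) = \ell(\mathcal F)$, and since $C'$ is a plane curve of degree $d$ we have $\rho_a(C') = \binom{d-1}{2}$, hence $\ell(\mathcal F) = \binom{d-1}{2} - \rho_a(C)$.

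**Connecting $\mathcal F$ to $K_1$.** The next step is to identify $\deg(\bar R/K_1(I_C))$ with this length $\ell(\mathcal F)$. By Proposition~\ref{defset}, $\sqrt{K_1(I_C)}$ defines set-theoretically the locus of points $q' \in C'$ with $\mathrm{mult}_{q'}(C') > 1$, i.e. the singular points of $C'$ lying over the "bad" locus of $\pi$. So $Z_1(C) = \Proj(\bar R/K_1(I_C))$ is a zero-dimensional scheme supported there, and $\deg(\bar R/K_1(I_C)) = H(\bar R/K_1(I_C), m)$ for $m \gg 0$. The plan is to extract this from Green's exact sequence for partial elimination ideals: there is an exact sequence of graded modules relating $R/I_C$, its image under elimination of $x_0$, and the $K_i$. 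More concretely, $H(R/I_C, m) - H(R/(I_C + (x_0)), m)$ counts elements with $d_0 \geq 1$, and iterating the filtration $K_0 \subset K_1 \subset \cdots$ together with the stabilization $K_i = \bar R$ for $i \geq d$ (here $\beta = \min\{j : (I_C)_j \neq 0\}$ can be large, but the relevant filtration is finite) yields, in high degree, $P_{C}(m) = \sum_{i \geq 0} P_{\bar R/K_i(I_C)}(m)$ after accounting for the generic hyperplane section. Comparing the degree-$1$ (linear in $m$) and constant terms: the leading terms give that $\bar R/K_0(I_C)$ has the Hilbert polynomial of the plane curve $C'$ (degree $d$, since $K_0(I_C) = I_{C'}$ up to radical and $C'$ is the image), while $\bar R/K_1(I_C)$ is finite, and its degree appears precisely as the constant-term discrepancy $\rho_a(C') - \rho_a(C) = \binom{d-1}{2} - \rho_a(C)$.

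**Organizing the bookkeeping.** Concretely I would write $P_C(z) = dz + 1 - \rho_a(C)$ and $P_{\bar R/K_0(I_C)}(z) = P_{C'}(z) = dz + 1 - \binom{d-1}{2}$ (using $K_0(I_C) \cap \bar R$ defines the projected curve $C'$ of degree $d$ and arithmetic genus $\binom{d-1}{2}$), and then use the partial elimination exact sequence (Green, \cite[Prop 6.2]{G} and the surrounding material) in the form that gives, for $m \gg 0$, $P_C(z) = P_{\bar R/K_0(I_C)}(z) + P_{\bar R/K_1(I_C)}(z) + (\text{contributions from } K_i, i \geq 2)$. Since $q$ is generic and $C$ is a curve, a generic projection to $\p^2$ has image with only points of multiplicity $\leq 2$ coming from the generic secant structure (multiplicity-$3$ points form a set of dimension $\leq 1$ in the curve, hence are avoided), so $K_2(I_C) = \cdots$ contribute nothing to the Hilbert polynomial — more carefully, $Y_2(C) = \emptyset$ generically. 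Hence $P_{\bar R/K_1(I_C)}(z)$ is the constant polynomial $\binom{d-1}{2} - \rho_a(C)$, which is exactly $\deg(\bar R/K_1(I_C))$.

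**Main obstacle.** The hard part will be justifying that the higher partial elimination ideals $K_i(I_C)$ for $i \geq 2$ contribute nothing — i.e. that a generic projection of an integral space curve to $\p^2$ produces only double points (nodes, generically, but at least multiplicity-two points) and no triple or worse points. This is where the Trisecant Lemma (Remark~\ref{remk}(a)) is essential: the trisecant lines sweep out at most a surface, so a generic point $q$ lies on no trisecant, forcing every fiber of $\pi$ to have length $\leq 2$, whence $\mathrm{mult}_{q'}(C') \leq 2$ for all $q'$ and $Y_i(C) = \emptyset$ for $i \geq 2$. One must also be careful that the singular points of $C$ itself don't interfere: the hypothesis (in the main theorem) that $\dim\Tan_p(C) = 2$ guarantees $\pi_q$ restricted near $p$ is still immersive for generic $q$, so $\delta$-invariants are correctly tracked under $\pi$. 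Assembling the genus-drop formula $\rho_a(C') = \rho_a(C) + \ell(\mathcal F)$ with $\ell(\mathcal F) = \deg Z_1(C)$ then closes the argument; the remaining steps are routine Hilbert-polynomial bookkeeping.
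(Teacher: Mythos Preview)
Your proof contains the correct argument, and it is essentially the paper's argument, but it is buried under material that does not belong to this lemma. The paper's proof is exactly what you describe under ``Organizing the bookkeeping'' together with the first half of ``Main obstacle'': the Hilbert-function decomposition
\[
H(R/I_C,m)=\sum_{i\ge 0}H(\bar R/K_i(I_C),m-i),
\]
the Trisecant Lemma to kill the $i\ge 2$ terms in high degree, and then the comparison $P_C(m)=P_{\pi(C)}(m)+P_{Z_1(C)}(m-1)$ with $P_C(m)=dm+1-\rho_a(C)$ and $P_{\pi(C)}(m)=dm+1-\binom{d-1}{2}$. That is the entire proof; no sheaves, no $\delta$-invariants, no hypothesis on tangent spaces.

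Everything in your ``Plan'' and ``Connecting $\mathcal F$ to $K_1$'' sections, and the second half of ``Main obstacle'', is extraneous and potentially circular. You propose to compute $\ell(\mathcal F)=\binom{d-1}{2}-\rho_a(C)$ from the short exact sequence for $\pi_*\mathcal O_C$ and then \emph{identify} $\ell(\mathcal F)$ with $\deg Z_1(C)$. But that identification is not obvious---it is essentially the content of Theorem~\ref{rdeg}, which requires the extra hypothesis $\dim\Tan_p(C)=2$ and is proved \emph{using} Lemma~\ref{lem3.1}. For the lemma itself you never need to know what $K_1(I_C)$ defines scheme-theoretically, only that $K_i(I_C)$ for $i\ge 2$ has empty support (Trisecant Lemma) so that the Hilbert-polynomial identity collapses to two terms. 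Drop the sheaf theory here and save it for Theorem~\ref{rdeg}. One minor correction: the decomposition has shifts, $P_{\bar R/K_i}(m-i)$ rather than $P_{\bar R/K_i}(m)$; this is harmless for $i=1$ since that polynomial is constant, but your formula as written is not literally correct.
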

\begin{proof}
The Hilbert function of $I_C$ is decomposed by the partial elimination ideals $K_i(I_C)$ as follows;
\begin{equation}\label{eq:HF}
H(R/I_C , m) = \sum_{i=0}^{\infty} H(\bar R /K_i (C), m-i).
\end{equation}
This comes from the following combinatorial identity
$$
\binom{m+d}{d} = \sum_{i=0}^{d} \binom{m-1+d-i}{d-i}.
$$
By Remark~\ref{remk}(a), we know that there is no trisecant line to $C$ passing through a general point.
This means that the zero locus of $K_i(I_C)$ is empty for $i \geq 2$ by Proposition~\ref{defset}.
So, $H(\bar R /K_i (C), m)=0$ for $m \gg 0$ and $i\ge 2$. Thus, the equality~(\ref{eq:HF}) can be reformulated by
\begin{equation}\label{eq:HF1}
P_C (m) = P_{\pi(C)}(m)+P_{Z_1(C)}(m-1) ~\mbox{for $m \gg 0$}.
\end{equation}
Since $\pi(C)$ is a plane curve of degree $d=\deg(C)$ and arithmetic genus $\rho_{a}(\pi(C))=\binom{d-1}{2}$, we know that
$P_C (m)=dm + 1-\rho_a(C)$, and  $P_{\pi(C)}(m)=dm +1-\binom{d-1}{2}$.
Consequently,
$$
\deg(\bar{R}/K_1 (I_C))=P_C (m)-P_{\pi(C)}(m)=\binom{d-1}{2}-\rho_a(C).
$$
\end{proof}

\begin{Thm}\label{rdeg}
Let $C$ be a non-degenerate integral curve of degree $d$ and
arithmetic genus $\rho_{a}(C)$ in $\p^{3}$. Assume that $\dim \Tan_p(C)=2$ for every singular point $p\in C$.
Then $K_1(I_C)$ is a radical ideal defining a set of reduced points $Y_1(C)$ of degree $\binom{d-1}{2}-p_a(C)$, which is the number of non-isomorphic points under a generic projection of $C$ into $\p^2$.
\end{Thm}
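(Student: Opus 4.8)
The strategy is to realize the scheme $Z_1(C)=\Proj(\bar R/K_1(I_C))$ as the set of ``double points'' of a generic planar projection of $C$, to compute the cardinality of its support from the arithmetic genus, and then to combine this with the degree computation of Lemma~\ref{lem3.1} to force $Z_1(C)$ to be reduced.

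First I would put $I_C$ in generic coordinates, so that the associated projection $\pi\colon C\to\p^2$ is projection from a general point $p$, and carry out a genericity analysis of $\pi$. Using the Trisecant Lemma (Remark~\ref{remk}(a)), a dimension count for the tangent developable surface, and --- crucially --- the hypothesis $\dim\Tan_q(C)=2$ at each singular point $q\in\mathrm{Sing}(C)$, one checks that for a general center $p$ the following hold simultaneously: $p\notin C$; $p$ lies on no trisecant line of $C$ and on no tangent line of $C$ at a smooth point; $p\notin\Tan_q(C)$ for every $q\in\mathrm{Sing}(C)$; and the finitely many singular points of $C$ have pairwise distinct images, none of which lies on a secant line through $p$. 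The non-routine point is the role of $\dim\Tan_q(C)=2$: since then $\Tan_q(C)$ is a plane, avoiding it is a nonempty open condition on $p$, and when $p\notin\Tan_q(C)$ the linear projection restricts to an isomorphism from the plane $\Tan_q(C)$ onto $\p^2$; this makes $d\pi_q$ injective on the $2$-dimensional Zariski tangent space, hence $\pi^{\#}\colon\mathcal O_{\pi(C),\pi(q)}\to\mathcal O_{C,q}$ is surjective on $\mathfrak m/\mathfrak m^2$ and, being injective, an isomorphism. Thus $\pi$ is a local isomorphism at every point of $C$, and its only fibres of length greater than one consist of exactly two reduced points, each producing an ordinary node of $\pi(C)$. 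By Proposition~\ref{defset}, $Y_1(C)=V(\sqrt{K_1(I_C)})$ is precisely this finite set of new nodes --- equivalently, the set of non-isomorphic points of $\pi$ --- and in particular $Z_1(C)$ is zero-dimensional with support $Y_1(C)$.

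Next I would count $\#Y_1(C)$ by genus bookkeeping. Since $\pi$ is birational, $C$ and $\pi(C)$ share the normalization $\widetilde C$, so $\rho_g:=\rho_a(\widetilde C)$ is their common geometric genus; and since $\pi$ is a local isomorphism at every singular point of $C$, the singularities of $\pi(C)$ are the images of those of $C$, with unchanged $\delta$-invariants, together with the $\#Y_1(C)$ new (ordinary) nodes. Applying the genus-drop formula $\rho_a-\rho_g=\sum_p\delta_p$ to $\pi(C)$, whose arithmetic genus is $\binom{d-1}{2}$, and to $C$ gives
$$
\binom{d-1}{2}-\rho_g\;=\;\sum_{q\in\mathrm{Sing}(C)}\delta_q(C)\;+\;\#Y_1(C)\;=\;\bigl(\rho_a(C)-\rho_g\bigr)+\#Y_1(C),
$$
and hence $\#Y_1(C)=\binom{d-1}{2}-\rho_a(C)$.

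Finally, Lemma~\ref{lem3.1} gives $\deg(\bar R/K_1(I_C))=\binom{d-1}{2}-\rho_a(C)=\#Y_1(C)$. Since $Z_1(C)$ is a zero-dimensional scheme whose length equals the number of points of its support, every point of $Z_1(C)$ has length one, so $Z_1(C)=Y_1(C)$ is a reduced set of $\binom{d-1}{2}-\rho_a(C)$ points. Comparing the Hilbert functions of the inclusion $K_1(I_C)\subseteq\sqrt{K_1(I_C)}=I_{Y_1(C)}$, and using that $K_1(I_C)$ is in generic coordinates by Proposition~\ref{thm2}, one then concludes that $K_1(I_C)$ is saturated, so $K_1(I_C)=I_{Y_1(C)}$ is radical --- which is the assertion, the identification with the non-isomorphic points being the last sentence of the second paragraph. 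I expect the main obstacle to be exactly that generic-projection analysis: establishing rigorously, for a general center $p$, that $\dim\Tan_q(C)=2$ forces $\pi$ to be a local isomorphism at each singular point of $C$ (so that the $\delta$-invariants are genuinely preserved and these points contribute nothing to $Y_1(C)$) while simultaneously excluding every degenerate incidence --- precisely where the hypothesis is essential, and where the Conca--Sidman example with $\dim\Tan_q(C)=3$ breaks down.
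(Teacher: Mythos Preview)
Your proposal is correct and follows essentially the same approach as the paper: use the tangent-dimension hypothesis to show the generic projection is a local isomorphism at every singular point of $C$, count the resulting new nodes via the genus-drop formula $\rho_a-\rho_g=\sum\delta_p$ applied to both $C$ and $\pi(C)$, compare with the degree from Lemma~\ref{lem3.1}, and deduce radicality from the saturation of $K_1(I_C)$. The only cosmetic difference is that the paper invokes \cite[Theorem~4.1]{A} for the saturation step where you appeal to Proposition~\ref{thm2} and a Hilbert-function comparison.
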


\begin{proof}
Let $f:\widetilde{C} \longrightarrow C$ be the normalization of $C$. Then we have the following exact sequence
\begin{equation*}\label{1}
0 \rightarrow \mathcal{O}_{C} \rightarrow f_{*}\mathcal{O}_{\widetilde{C}} \rightarrow \sum_{p \in C} (f_{*} \mathcal{O}_{\widetilde C})_p/\mathcal{O}_{C,p} \rightarrow 0
\end{equation*}
where $(f_{*} \mathcal{O}_{\widetilde C})_p$ is the integral closure of $\mathcal{O}_{C,p}$.
Thus we have the equation
\begin{equation}\label{eq:300}
\begin{array}{llllllllllllllllll}
\sum_{p \in C} \delta_{p}&= \chi (f_{*} \mathcal{O}_{\widetilde{C}})-\chi (\mathcal{O}_C) \\
                           &=(1-\rho_a(\widetilde C) ) - (1-\rho_a(C)) \\
                           &=\rho_{a} (C) - \rho_{a}(\widetilde{C})
\end{array}
\end{equation}
where $\delta_p = \mathrm{length}((f_{*} \mathcal{O}_{\widetilde C})_p/\mathcal{O}_{C,p})$.
Now consider the following commutative diagram:
\begin{equation*}
\xymatrix{
\widetilde C \ar[r]^f \ar[dr]_{\pi ^{'}} & **[r] C \ar[d]^{\pi}\subset \p^3\\
                                         & **[r] \pi(C) \subset \p^2  }
\end{equation*}
where $\pi^{'} =\pi \circ f : \widetilde{C} \longrightarrow \p^2$. The assumption that $\dim \Tan_p(C)=2$ for every singular point $p\in C$ implies that the generic projection $\pi : C \longrightarrow \p^2$ gives a local isomorphism around every singular point $p\in C$ and thus we have
\[
\begin{array}{llllllllllllll}
\delta_p &= \mathrm{length}((f_{*} \mathcal{O}_{\widetilde C})_p/\mathcal{O}_{C,p})\\
            &= \mathrm{length}((\pi^{'}_{*}\mathcal{O}_{\widetilde{C}})_q/\mathcal{O}_{\pi(C),q} )&=\delta_q
\end{array}
\]
where $q=\pi(p)$. By virtue of Remark~\ref{remk}, we see that the fiber of a generic projection of the curve $C$ contain at most two points scheme and thus non-isomorphic points in $\pi(C)$ under a generic projection of $C$ into $\p^2$ are only nodes, whose set is defined by $\sqrt{K_1(I_C)}$. If $q^{'}=\pi(p^{'})$ is such a node then one knows $\delta_{p^{'}}=0$ and $\delta_{q^{'}}=1$ since $p^{'}\in C$ is a smooth point and $q^{'}\in\pi(C)$ is a nodal point. Hence we have
\begin{equation}\label{eq:301}
\deg(\bar R/\sqrt{K_1(I_C)})=\sum_{q \in \pi(C)} \delta_{q}-\sum_{p \in C} \delta_{p}.
\end{equation}
On the other hand, consider the short exact sequence:
$$
0 \rightarrow \mathcal{O}_{\pi(C)} \rightarrow \pi^{'}_{*}\mathcal{O}_{\widetilde{C}} \rightarrow \sum_{q \in \pi(C)} (\pi^{'}_{*}\mathcal{O}_{\widetilde{C}})_q/\mathcal{O}_{\pi(C),q} \rightarrow 0.
$$
Then we also obtain the following equation
\begin{equation*}\label{3}
\chi (\mathcal{O}_{\pi(C)}) - \chi (\pi_{*}^{'}(\mathcal{O}_{\widetilde{C}})) + \sum_{q \in \pi(C)} \delta_{q}=0,
\end{equation*}
which implies that
\begin{equation}\label{eq:302}
\sum_{q \in \pi(C)} \delta_{q}= \chi (\pi^{'}_{*}(\mathcal{O}_{\widetilde{C}}))-\chi (\mathcal{O}_{\pi(C)})
= \binom{d-1}{2} - \rho_{a}(\widetilde{C}).
\end{equation}

So, we have
$$
\begin{array}{lllllllllll}
\deg(\bar R/\sqrt{K_1(I_C)})&=\sum_{q \in \pi(C)} \delta_{q}-\sum_{p \in C} \delta_{p} &(\mbox{by equation (\ref{eq:301})})\\
                                           &=\binom{d-1}{2}-\rho_a (\widetilde{C})- \left(\rho_{a} (C) - \rho_{a}(\widetilde{C})\right)&(\mbox{by equation (\ref{eq:300}) and (\ref{eq:302})})\\
                                           &=\binom{d-1}{2}-\rho_a (C).
\end{array}
$$
We know that $\deg(\bar R/K_1(I_C))=\binom{d-1}{2}-\rho_a (C)$ by Lemma~\ref{lem3.1}. Thus we have
$$\deg(\bar R/\sqrt{K_1(I_C)}) =  \deg(\bar R/K_1(I_C)) .$$
Since $K_1(I_C)$ defines a zero-dimensional scheme, we have $\sqrt{K_1(I_C)} = K_1(I_C)^{\mathrm{sat}}$. Then we conclude that $K_1(I_C)$ is a radical ideal defining a set of points with degree $\binom{d-1}{2}-\rho_a (C)$ since $K_1(I_C)$ is already saturated(see \cite[Theorem 4.1]{A}).
\end{proof}

\begin{Cor}\label{rdeg}
Let $C$ be a non-degenerate integral curve of degree $d$ and
arithmetic genus $\rho_{a}(C)$ in $\p^{3}$. Assume that $\delta_p=1$ for every singular point $p \in C$.
Then $K_1(I_C)$ is a reduced ideal defining a set $Y_1(C)$ which consists of distinct $\binom{d-1}{2}-p_a(C)$ points.
\end{Cor}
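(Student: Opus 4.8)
The plan is to deduce this corollary from the preceding theorem; the only thing to verify is that its hypothesis, $\dim\Tan_p(C)=2$ at every singular point, is forced by the assumption $\delta_p=1$. So the first step I would carry out is a local classification: a singular point $p$ of an integral curve with $\delta_p=1$ is either a node or an ordinary cusp. This is classical and may simply be quoted (for instance from \cite[Exercise~IV.1.8]{H}), but the short reason is that $\delta_p=1$ means the $\mathcal O_{C,p}$-module $\widetilde{\mathcal O}_{C,p}/\mathcal O_{C,p}$ has length one, hence is $\cong k$, so that $\mathfrak m_p\widetilde{\mathcal O}_{C,p}\subseteq\mathcal O_{C,p}$; reading this off gives $\mathrm{mult}_p(C)=2$ and pins $\mathcal O_{C,p}$ down to $k[[u,v]]/(uv)$ when $p$ has two branches and to $k[[t^2,t^3]]$ when $p$ has a single branch.

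Next I would check that $\dim\Tan_p(C)=2$ in each of these two cases. At a node the two branches have distinct tangent lines meeting at $p$; their span is a $2$-plane, and a short check shows that this plane is exactly $\Tan_p(C)$, the union of limiting positions of secant lines at $p$, so $\dim\Tan_p(C)=2$. At an ordinary cusp I would fix local analytic coordinates in which $C$ is parametrized near $p$ by $t\mapsto(t^2,t^3,0,\dots,0)$; since this branch is planar, every secant and every limit of secants lies in $\{x_3=\dots=x_r=0\}$, while the tangent direction and the limiting direction coming from the pair $(t^2,t^3,0,\dots),\,(t^2,-t^3,0,\dots)$ are linearly independent, so once more $\dim\Tan_p(C)=2$. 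Hence $C$ meets the hypothesis of the preceding theorem; in particular the generic projection $\pi\colon C\to\p^2$ is a local isomorphism at every singular point of $C$.

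The corollary now follows at once. By the preceding theorem $K_1(I_C)$ is a radical ideal, so $Z_1(C)=Y_1(C)$ is a reduced $0$-dimensional subscheme of $\p^2$, and by Lemma~\ref{lem3.1} its degree is $\binom{d-1}{2}-\rho_a(C)$; since a reduced finite scheme of degree $n$ is a disjoint union of $n$ distinct reduced points, $Y_1(C)$ consists of exactly $\binom{d-1}{2}-\rho_a(C)$ distinct points, namely the non-isomorphic points of the generic plane projection. Essentially all of the content lies in the first two steps, and the one place needing genuine care is the cusp case of Step~2 — one must confirm that no limit of secants leaves the coordinate plane and that at least two independent limiting directions occur — but this is a routine explicit computation and, if one prefers, can be bypassed entirely by invoking the classification of singularities with $\delta_p=1$.
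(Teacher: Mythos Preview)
Your proposal is correct and shares the paper's overall strategy: reduce the corollary to the preceding theorem by showing that $\delta_p=1$ forces $\dim\Tan_p(C)=2$. The verification of this implication, however, proceeds quite differently.

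The paper argues purely by length computations in the local ring. Writing $S=\mathcal O_{C,p}$ and $\widetilde S$ for its integral closure, one first notes $m_p\widetilde S=m_p$ and $\mathrm{length}_S(\widetilde S/m_p\widetilde S)=2$, then splits according to whether $f^{-1}(p)$ has one or two points. In each case a direct calculation gives $\mathrm{length}_S(\widetilde S/(m_p\widetilde S)^2)=4$, and the exact sequence
\[
0\to S/m_p^2\to \widetilde S/(m_p\widetilde S)^2\to \widetilde S/S\to 0
\]
together with $\mathrm{length}(\widetilde S/S)=\delta_p=1$ yields $\dim_k(m_p/m_p^2)=2$.

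You instead invoke the classification of $\delta_p=1$ singularities as nodes or ordinary cusps and then verify the tangent-space dimension geometrically in each case. This is perfectly valid, and arguably more transparent, but it outsources the real work to the quoted classification; the paper's route is self-contained and never needs to name the singularity type. One small caution: your description of $\Tan_p(C)$ as ``the union of limiting positions of secant lines'' is the tangent \emph{star}, not the Zariski tangent space. What you actually use (and what is true) is that the Zariski tangent space is the linear span of these limiting secants; your cusp computation correctly exhibits two independent limiting directions, so the span has dimension~$2$, but the phrasing should be tightened.
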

\begin{proof}
It is enough to show that the condition $\delta_p=1$ implies $\dim\Tan_p(C)=2$ for every singular point $p\in C$.
Let $m_p\subset S=\mathcal{O}_{C,p}$ be a maximal ideal and $\widetilde{S}=(f_{*}\mathcal{O}_{\widetilde{C}})_p$.
Since $\delta_p =1$, it is easy to check that $ m_p = m_p \widetilde S$ (as sets) and $\mathrm{length}_S (\widetilde S / m_p \widetilde S) = 2$.
Therefore $f^{-1}(p)$ consists of at most two points.
First, in case $f^{-1}(p)$ consists of one point then, there is a unique maximal ideal  $\widetilde m_p=(t)$ such that
$\widetilde m_p \supsetneq m_p \widetilde S$ in the regular local ring $\widetilde{S}$ and $m_p\widetilde S=(t^2)$.
Therefore, $\mathrm{length}_S (\widetilde S/(m_p\widetilde S)^2)=\mathrm{length}_{S} (\widetilde S/(t^4))=4$.
Since we have the following exact sequences
\begin{equation}\label{cotan-seq}
0 \longrightarrow S/{m_p}^2 \longrightarrow  \widetilde S/(m_p \widetilde S)^2 \longrightarrow \widetilde S / S \longrightarrow 0
\end{equation}
\begin{equation*}
0 \longrightarrow m_p/{m_p}^2 \longrightarrow  S/{m_p}^2 \longrightarrow S /m_p \longrightarrow 0,
\end{equation*}
by the additivity of the length functions, we have $\mathrm{length}_S (S/{m_p}^2)=3$ and $\dim \mathrm{Tan}_p(C)=\dim_k(m_p/{m_p}^2)=2.$
Now let us assume that $f^{-1}(p)$ consists of distinct two points. Then $\widetilde S$ has precisely two maximal ideals $\widetilde m_1$
and $\widetilde m_2$.
As $m_p\widetilde S \subseteq \widetilde m_1 \cap \widetilde m_2 \varsubsetneq \widetilde m_1 ,\widetilde m_2$ and
$\dim_k(\widetilde S/m_p\widetilde S)=2$, we get $m_p\widetilde S = \widetilde m_1 \cap \widetilde m_2 = \widetilde m_1 \widetilde m_2$ and
the Chinese Remainder Theorem yields an isomorphism
$$\widetilde S/(m_p\widetilde S)^{2} \cong \widetilde S / \widetilde m_1^{2} \times \widetilde S / \widetilde m_2^{2}.$$
Since $\widetilde S_{\widetilde m_i}$ are regular local rings of dimension $1$, it can be checked that for $i = 1,2$
$$\mathrm{length}_S (\widetilde S / \widetilde m_i^{2})=\mathrm{length}_{S}(\widetilde S_{\widetilde m_i}/\widetilde m_i^{2}\widetilde S_{ \widetilde m_i})=2.$$
Thus we obtain $\mathrm{length}_S (\widetilde S/(m_p\widetilde S)^{2})=4$ and  consequently, by the sequences (\ref{cotan-seq}) again,
it is shown that $\dim \mathrm{Tan}_p (C)=2.$
\end{proof}
%

\begin{Thm}\label{max2}
Let $I_{C}$ be the defining ideal of an integral curve $C$ of degree $d$ in $\p^3$,
with $\dim\Tan_p(C)=2$ for every singular point $p \in C$, then
\begin{itemize}
\item[(a)] $M(I_{C}) = \max \{ d , 1+ \binom{d-1}{2}-\rho_a(C)  \}.$
\item[(b)] $M(I_C)$ can be obtained from one of the following two monomial generators
$$
x_1^d, \,\, x_0 x_2^{\binom{d-1}{2}-\rho_a(C)}.
$$
\end{itemize}
\end{Thm}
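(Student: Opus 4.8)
The plan is to apply Corollary~\ref{cor:305} to the ideal $I_C \subset k[x_0,x_1,x_2,x_3]$ in generic coordinates, which reduces the computation of $M(I_C)$ to understanding the partial elimination ideals $K_i(I_C)$ for $0 \le i \le \beta$, where $\beta = \min\{j \mid (I_C)_j \neq 0\}$. Since $C$ is non-degenerate in $\p^3$, it lies on no plane, so $\beta \ge 2$; and since $C$ is irreducible of degree $d$, Remark~\ref{remk}(a) (the Trisecant Lemma) combined with Proposition~\ref{defset} tells us that $Z_i(C)$ is empty for $i \ge 2$ in generic coordinates, which forces $K_i(I_C) = \bar R$ for $i \ge 2$ — in particular $M(K_i(I_C)) = 0$ for $i \ge 2$, and these terms contribute nothing beyond what $i=0,1$ give. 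So the formula collapses to
\begin{equation*}
M(I_C) = \max\{\, M(K_0(I_C)),\ M(K_1(I_C)) + 1 \,\}.
\end{equation*}

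For $i=0$: by Proposition~\ref{defset}, $K_0(I_C) = I_C \cap \bar R$ is the saturated ideal of the generic plane projection $\pi(C) \subset \p^2$, a plane curve of degree $d$, whose ideal is principal generated by a single form of degree $d$. Hence $\Gin_{\lex}(K_0(I_C)) = (x_1^d)$ and $M(K_0(I_C)) = d$, realized by the monomial $x_1^d$. For $i=1$: by Theorem~\ref{rdeg} the hypothesis $\dim\Tan_p(C) = 2$ at every singular point guarantees $K_1(I_C)$ is a radical ideal defining a reduced set $Y_1(C)$ of exactly $\binom{d-1}{2} - \rho_a(C)$ points in $\p^2$, the non-isomorphic (nodal) points of the projection. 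For a reduced zero-dimensional scheme of $\delta := \binom{d-1}{2}-\rho_a(C)$ points in $\p^2$ in generic coordinates, the graded lex generic initial ideal is the lex-segment ideal with Hilbert function equal to $\min\{\binom{m+2}{2}, \delta\}$; one checks directly that this ideal has maximal generator degree $\delta$, attained by $x_1 x_2^{\delta - 1}$ (the "next" monomial after the Hilbert function saturates at $\delta$). Thus $M(K_1(I_C)) = \delta = \binom{d-1}{2}-\rho_a(C)$, and passing back through the $f \mapsto x_0^1 \bar f$ correspondence of Proposition~\ref{thm2}(b), the corresponding generator of $\Gin_{\lex}(I_C)$ is $x_0 \cdot x_2^{\binom{d-1}{2}-\rho_a(C)}$. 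Combining, $M(I_C) = \max\{d,\ 1 + \binom{d-1}{2}-\rho_a(C)\}$, realized by one of $x_1^d$ or $x_0 x_2^{\binom{d-1}{2}-\rho_a(C)}$ according to which term is larger, which is exactly (a) and (b).

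The main obstacle is the precise description of $\Gin_{\lex}$ of a generic set of $\delta$ reduced points in $\p^2$ — specifically, pinning down that the maximal generator degree is exactly $\delta$ and that it comes from the monomial $x_1 x_2^{\delta-1}$. This requires knowing that for points in generic position the Hilbert function of $\bar R / K_1(I_C)$ is the "expected" one $\min\{\binom{m+2}{2}, \delta\}$, and then that the unique Borel-fixed (lex-segment) monomial ideal with this Hilbert function in $k[x_1,x_2]$ (after the further elimination, effectively in two variables past the saturated degree) has its top generator in degree $\delta$. Here one leans on the fact (Section~2(f)) that for a Borel-fixed ideal the regularity equals the top generator degree, together with the known regularity $\delta$ of $\delta$ general points in the plane; a short direct combinatorial argument with the lex-segment then identifies the generator as $x_1 x_2^{\delta-1}$. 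One must also double-check the edge behavior when $d$ and $1+\binom{d-1}{2}-\rho_a(C)$ coincide or when $\rho_a(C)$ is large (e.g. $\delta$ small relative to $d$), but since $d \ge 5$ these cases are routine to verify against the two candidate monomials.
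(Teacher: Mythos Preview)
Your overall strategy via Corollary~\ref{cor:305} is exactly the one the paper uses, but there is a genuine gap in the treatment of the higher partial elimination ideals. You assert that the Trisecant Lemma together with Proposition~\ref{defset} forces $K_i(I_C)=\bar R$ for $i\ge 2$. This is false in general: Proposition~\ref{defset} only tells you that $\sqrt{K_i(I_C)}$ defines the empty set, i.e.\ that $\bar R/K_i(I_C)$ is \emph{Artinian}, not that it is zero. Indeed, $1\in K_i(I_C)$ would require an element of $I_C$ of degree $i$, so whenever $C$ lies on no quadric (for instance any complete intersection of two cubics) one has $\beta\ge 3$ and $K_2(I_C)\subsetneq \bar R$. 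Hence the terms $M(K_i(I_C))+i$ for $2\le i<\beta$ cannot simply be discarded; a priori they could exceed $s=\max\{d,\,1+\binom{d-1}{2}-\rho_a(C)\}$. The paper closes this gap with a Hilbert-function argument: using the decomposition $H(R/I_C,m)=\sum_{i\ge 0}H(\bar R/K_i(I_C),m-i)$ and the fact that for $m\ge s$ both $R/I_C$ and $\bar R/K_0$, $\bar R/K_1$ agree with their Hilbert polynomials, one deduces $H(\bar R/K_t(I_C),m-t)=0$ for all $t\ge 2$ and $m\ge s$, which gives $\reg(\Gin(K_t(I_C)))+t\le s$. You need this (or an equivalent) step.

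A smaller issue concerns your handling of $K_1(I_C)$. The nodes $Y_1(C)$ of a generic projection are reduced, but there is no reason they lie in general position in $\p^2$, so the Hilbert function of $\bar R/K_1(I_C)$ need not be $\min\{\binom{m+2}{2},\delta\}$; and $\Gin_{\lex}$ of a points ideal is Borel-fixed but not the lex-segment ideal in general. Also, your identification of the extremal generator as $x_1x_2^{\delta-1}$ is inconsistent with your own conclusion $x_0x_2^{\delta}$ (under the correspondence of Proposition~\ref{thm2}(b) the monomial in $\Gin(K_1)$ should be $x_2^{\delta}$). The correct, position-independent, reason is that a generic projection of the $\delta$ reduced points $Y_1(C)$ from $[1,0,0]$ to the line $x_1=0$ gives $\delta$ distinct points, so $K_0(K_1(I_C))\subset k[x_2,x_3]$ is principal of degree $\delta$; this yields $x_2^{\delta}\in\Gin_{\lex}(K_1(I_C))$ and hence $M(K_1(I_C))\ge\delta$, while the upper bound $M(K_1(I_C))\le\delta$ is the standard fact $M(I_{\text{$\delta$ reduced points}})=\delta$ from \cite{CS} (or \cite{A}).
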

\begin{proof}
Note that by Theorem 3.5 in \cite{A},
$$
M(I_C)=\underset{k \geq 0}{\max} \{\reg(\Gin(K_k(I_C)))+k \}.$$
Let $s=\max \{ d, 1+ \binom{d-1}{2}-\rho_a(C) \}$.
Since $K_0(I_C)$ defines a plane curve $\pi(C)$ of degree $d$
and $K_1(I_C)$ defines a set of points of degree $\binom{d-1}{2}-\rho_a(C)$,
$$
\reg(\Gin(K_0(I_C)))=d
$$
and
$$
\reg(\Gin(K_1(I_C)))=\binom{d-1}{2}-\rho_a(C) ~~\mbox{(Theorem \ref{rdeg})}.
$$
This means that $M(I_C) \geq s.$

Conversely, to prove that $M(I_C) \leq s$ it suffices to show that
$$
\reg(\Gin(K_t(I_C))) \leq s-t ~\mbox{for all $t \geq 2$.}
$$
Let $\bar{R}_{t}=\bar{R}/K_{t}(I_{C})$ for each $t \geq 0$. We know
that $\bar{R}_{t}$ is an Artinian ring for $t \geq 2$ and from the
definition of regularity using the local cohomology, that
$\reg(K_{t}(I_{C}))=\min \{ m | H(\bar{R}_{t},m)=0 \}$. Now, we will
prove that if $m \geq s$ then $H(\bar{R}_{t},m-t)=0$, for all $t
\geq 2$. It is enough to show that for all
$m \geq s$
\begin{equation*}
H(R/I,m) = H(\bar{R}_{0},m) + H(\bar{R}_{1},m-1).
\end{equation*}
By the regularity bound,
\begin{equation}\label{glp}
H(R/I,m)=P_C(m) ~\mbox{if $m \geq s \geq d.$}
\end{equation}
Note that $Y_0(C)$ is a plane curve of degree $d$ in $\p^2$ and
$Y_1(C)$ is a reduced set of points of degree
$\binom{d-1}{2}-\rho_a(C).$\\
Thus if $m \geq s$ then $m\geq \reg ~Y_i(C), i=0,1$ and thus,
$$
H(\bar{R}_0, m)=P_{Y_0(C)}(m),
$$
$$
H(\bar{R}_1, m-1)=P_{Y_1(C)}(m-1)=\binom{d-1}{2}-\rho_a(C).
$$
Consequently, we have that if $m \geq s$ then
\begin{equation*}
\begin{split}
H(R/I,m) = & P_S(m)=P_{Y_0(C)}(m)+P_{Y_1(C)}(m-1) \\
         = & H(\bar{R}_0, m)+H(\bar{R}_1, m-1).
\end{split}
\end{equation*}
For a proof $(b)$, Since a generic projection of $C$ is a hypersurface of degree $d$ in $\p^2$,
we have that $\Gin(K_0 (I_C))=(x_1 ^d)$ by the Borel fixed property.
Furthermore we can consider all monomial generators of the form $x_0 \cdot h_j(x_1,x_2,x_3)$ in $\Gin(I_C)$.
Then, $\{ h_j(x_1,x_2,x_3) \}$ is a minimal generating set of $\Gin(K_1 (I_C))$ by Proposition~\ref{thm2}.
Recall that $K_1 (I_C)$ defines $\binom{d-1}{2}-\rho_a(C)$ distinct nodes in $\p^2$.
Thus $\Gin(K_1(I_C))$ should contain the monomial $x_2^{\binom{d-1}{2}-\rho_a(C)}$.
Therefore, $\Gin(I_C)$ contains monomials $x_1^d , x_0 x_2^{\binom{d-1}{2}-\rho_a(C)}$.
\end{proof}

\begin{remk}\label{remk:3.6}
Let $C \subset \p^r$, $r \geq 4$ with $\dim\Tan_p(C)=2$ for every singular point $p \in C$.
Consider the generic projection $\pi_{\Lambda}$ from a generic $(r-4)$-dimensional linear subvariety $\Lambda \subset \p^r$.
Since $\dim\Tan_p(C)=2$ for every singular point $p \in C$ we know that a generic projection $\pi_{\Lambda} : C \longrightarrow \p^3$ is an isomorphism and
$M(I_C) = M(I_{\pi_{\Lambda}(C)})$ by Remark~\ref{remk}(b). Thus we may assume that $I_{\pi_{\Lambda}(C)} \subset k[x_{r-3},\ldots , x_{r}]$ and
$M(I_C)$ can be obtained from one of the following two monomial generators
$$
x_{r-2}\,^d, \,\, x_{r-3} x_{r-1}\,^{\binom{d-1}{2}-\rho_a(C)}.
$$
\end{remk}

Therefore we get the following Corollary~\ref{maincor}.
\begin{Cor}\label{maincor}
Let $I_{C}$ be the defining ideal of an integral curve $C$ of degree $d$ in $\p^r$, $r \geq 4$
with $\dim\Tan_p(C)=2$ for every singular point $p \in C$, then
\begin{itemize}
\item[(a)] $M(I_{C}) = \max \{ d , 1+ \binom{d-1}{2}-\rho_a(C)  \}.$
\item[(b)] $M(I_C)$ can be obtained from one of the following two monomial generators
$$
x_{r-2}\,^d, \,\, x_{r-3} x_{r-1}\,^{\binom{d-1}{2}-\rho_a(C)}.
$$
\end{itemize}
\end{Cor}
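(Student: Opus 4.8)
The plan is to reduce the statement, by an iterated generic projection, to the case $r=3$ already established in Theorem~\ref{max2}, and then to transport the two distinguished monomial generators back through the coordinate identification.

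\emph{Step 1: reduction to $\p^{3}$.} Since $\dim\Tan_p(C)=2$ at every singular point $p\in C$, Remark~\ref{remk}(b) shows that a generic point projection $\pi:\p^{r}\dashrightarrow\p^{r-1}$ restricts to an isomorphism $C\st{\sim}\pi(C)$ and that $M(I_C)=M(I_{\pi(C)})$ by \cite[Theorem 1.2]{A}. A scheme isomorphism preserves Zariski tangent spaces, so $\pi(C)$ again satisfies $\dim\Tan_q(\pi(C))=2$ at each of its singular points, and the procedure may be iterated. Composing $r-3$ such projections --- equivalently, projecting from a generic $(r-4)$-plane $\Lambda\subset\p^{r}$ as in Remark~\ref{remk:3.6} --- produces an integral curve $\pi_{\Lambda}(C)\subset\p^{3}$, isomorphic to $C$ and, for generic $\Lambda$, non-degenerate in $\p^{3}$, hence of the same degree $d$ and arithmetic genus $\rho_a(C)$, with $M(I_C)=M(I_{\pi_{\Lambda}(C)})$. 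After the evident change of coordinates we may take $I_{\pi_{\Lambda}(C)}\subset k[x_{r-3},x_{r-2},x_{r-1},x_{r}]$, where the ordering $x_{r-3}>x_{r-2}>x_{r-1}>x_{r}$ plays the role of $x_0>x_1>x_2>x_3$ in Theorem~\ref{max2}.

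\emph{Step 2: apply the $\p^{3}$ case.} Because $\dim\Tan_q(\pi_{\Lambda}(C))=2$ at every singular point, Theorem~\ref{max2}(a) gives
\[
M(I_{\pi_{\Lambda}(C)})=\max\Bigl\{\, d,\ 1+\binom{d-1}{2}-\rho_a(C)\,\Bigr\},
\]
which, combined with Step 1, proves part~(a). For part~(b), Theorem~\ref{max2}(b) says that this maximum is attained by one of the minimal generators $x_{r-2}\,^{d}$ or $x_{r-3}x_{r-1}\,^{\binom{d-1}{2}-\rho_a(C)}$ of $\Gin_{\mathrm{GLex}}(I_{\pi_{\Lambda}(C)})$; under the reduction of Step 1 --- which is exactly the content packaged in Remark~\ref{remk:3.6} --- the corresponding minimal generator of maximal degree of $\Gin_{\mathrm{GLex}}(I_C)\subset k[x_0,\ldots,x_{r}]$ is the same monomial, which is precisely the assertion of~(b).

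\emph{Main obstacle.} The only delicate point is that both the hypothesis $\dim\Tan_p(C)=2$ and the invariant $M$ survive the iterated projection; I expect no genuine difficulty, since the isomorphism statement and the identity $M(I_C)=M(I_{\pi(C)})$ for a single generic point projection are exactly Remark~\ref{remk}(b) together with \cite[Theorem 1.2]{A}, while invariance of Zariski tangent dimensions under an isomorphism is formal, so the induction on $r$ closes. In fact all of this bookkeeping has already been assembled in Remark~\ref{remk:3.6}, and the corollary follows at once from that remark and Theorem~\ref{max2}.
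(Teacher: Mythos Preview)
Your proof is correct and follows exactly the paper's approach: the paper deduces the corollary directly from Remark~\ref{remk:3.6} (reduction to $\p^{3}$ by a generic projection from an $(r-4)$-plane, using Remark~\ref{remk}(b)) together with Theorem~\ref{max2}, and you have simply written out that remark in more detail. The only additional care you take---checking that the tangent-space hypothesis persists under the isomorphic projection so that the induction on $r$ closes---is implicit in the paper's one-line appeal to Remark~\ref{remk}(b).
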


\begin{prop}\label{mainthm}
Let $C$ be a non-degenerate integral curve of degree $d$ and arithmetic genus $p_a(C)$ in $\p^{r}$, $r \geq 3$,
with $\dim\Tan_p(C)=2$ for every singular point $p \in C$. Then
\[
M(I_{C})=
\begin{cases}
3  &\text{if $d=3$, i.e. $C$ is a rational curve of minimal degree;}\\
4  &\text{if $d=4$, i.e. $C$ is of next to minimal degree;}\\
1+\binom{d-1}{2}-\rho_{a}(C) &\text{for}\,\, d\ge 5.\\
\end{cases}
\]
\end{prop}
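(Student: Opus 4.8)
The plan is to reduce the proposition to a purely numerical comparison and then feed in one classical genus estimate. First I would record that, under the hypothesis $\dim\Tan_p(C)=2$ at every singular point $p\in C$, Theorem~\ref{max2} (for $r=3$) and Corollary~\ref{maincor} (for $r\ge4$) together give
\[
M(I_C)=\max\{\,d,\ 1+\binom{d-1}{2}-\rho_a(C)\,\}
\]
for every $r\ge3$, so that the only geometric input still needed is an upper bound for $\rho_a(C)$. Recall here that $C$ integral forces $\rho_a(C)=h^1(\mathcal O_C)\ge0$.

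For the two exceptional cases only positivity of the arithmetic genus is used. If $d=3$, then $1+\binom{2}{2}-\rho_a(C)=2-\rho_a(C)\le2<3$, so $M(I_C)=3$; here a non-degenerate integral curve of degree $3$ is forced to lie in $\p^3$ and to be the twisted cubic, which accounts for the phrasing in the statement. If $d=4$, then $1+\binom{3}{2}-\rho_a(C)=4-\rho_a(C)\le4=d$, hence $M(I_C)=\max\{4,\,4-\rho_a(C)\}=4$.

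For $d\ge5$ the assertion $M(I_C)=1+\binom{d-1}{2}-\rho_a(C)$ is equivalent to $1+\binom{d-1}{2}-\rho_a(C)\ge d$, that is, to
\[
\rho_a(C)\le\binom{d-1}{2}-(d-1)=\frac{(d-1)(d-4)}{2}
\]
When $r\ge4$, Remark~\ref{remk:3.6} provides a generic projection that is an isomorphism onto a non-degenerate integral curve of the same degree and arithmetic genus in $\p^3$, so we may assume $C\subset\p^3$. I would then invoke Castelnuovo's genus bound for non-degenerate integral space curves, namely $\rho_a(C)\le\frac{(d-1)(d-3)}{4}$ if $d$ is odd and $\rho_a(C)\le\frac{(d-2)^2}{4}$ if $d$ is even. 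An elementary check shows that each of these quantities is at most $\frac{(d-1)(d-4)}{2}$ for all $d\ge5$ (the odd case amounts to $d-3\le2(d-4)$ and the even case to $d^2-6d+4\ge0$), with equality when $d=5$. This yields the required inequality and finishes the proof.

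The only step that needs care is the appeal to Castelnuovo: what is required is a bound on the \emph{arithmetic} genus of a possibly singular integral non-degenerate curve, not merely the familiar bound on the geometric genus of a smooth curve. I would obtain it from the general position theorem for a general hyperplane section of an integral non-degenerate curve in characteristic zero, which is exactly the ingredient that makes the classical counting argument bound $h^1$ of the twists of $\mathcal O_C$, and hence $\rho_a(C)$. Because the inequality $\frac{(d-1)(d-3)}{4}\le\frac{(d-1)(d-4)}{2}$ is an equality for $d=5$, no weaker bound on $\rho_a(C)$ could replace it; everything else in the argument is bookkeeping with the two formulas from Theorem~\ref{max2} and Corollary~\ref{maincor} and the positivity $\rho_a(C)\ge0$.
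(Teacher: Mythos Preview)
Your proof is correct and follows essentially the same route as the paper: reduce to the formula $M(I_C)=\max\{d,\,1+\binom{d-1}{2}-\rho_a(C)\}$ via Theorem~\ref{max2} and Corollary~\ref{maincor}, then invoke the Castelnuovo genus bound for non-degenerate integral curves in $\p^3$ to settle the comparison for $d\ge5$, treating $d=3,4$ separately. Your handling of the small cases using only $\rho_a(C)\ge0$ is slightly cleaner than the paper's, and your explicit remark that the Castelnuovo bound must be applied to the \emph{arithmetic} genus (justified via general position for a general hyperplane section) makes precise a point the paper leaves to its citation of the Eisenbud--Harris notes.
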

\begin{proof}
From Remark~\ref{remk:3.6}, we can reduce the case of an integral curve $C$ in $\p^3$.  
By Theorem~\ref{max2},
\begin{equation*}\label{9}
M(I_C)=\reg(\Gin_{\lex}(I_{C})) = \max \{d, 1+\binom{d-1}{2}-\rho_{a}(C) \}
\end{equation*}

Applying the genus bound in the Montreal lecture note of Eisenbud and Harris(1982)
to a non-degenerate integral curve $C\subset \p^3$, we get
\[
\rho_{a}(C) \leq  \pi(d,3) =
\begin{cases}
(\frac{d}{2}-1)^{2}  &\text{if $d$ is even;}\\
(\frac{d-1}{2})(\frac{d-3}{2})  &\text{if $d$ is odd.}
\end{cases}
\]
and for all $d \geq 5$, we have the following inequality:
\begin{equation}\label{10}
\rho_{a}(C) \leq \pi(d,3) \leq 1+\binom{d-1}{2} - d.
\end{equation}
Thus,
$$
d \leq 1+\binom{d-1}{2} - \rho_{a}(C)
$$
and by Theorem ~\ref{max2}, for $d \geq 5$,
$$M(I_{C})=1+\binom{d-1}{2}-\rho_{a}(C).$$
For special two cases of $d=3$ and $d=4$, it is very easy to compute $M(I_C)$. \\
If $d=3$ then $C$ is a rational normal curve and
$1+\binom{d-1}{2}-\rho_{a}(C)=2 < 3 = \deg(C)$.
Therefore, $M(I_C)=3$.
On the other hand, when $d=4$, we get the inequality $\rho_{a}(C)\leq \pi(4,3)= 1$.
Since $1+\binom{d-1}{2}-\rho_{a}(C)= 3 \,\,\text {or}\,\, 4$, we have $M(I_C)=4$.
\end{proof}

\begin{Ex}[\texttt{Singular}~\cite{DGPS}, {\it Macaulay 2}~\cite{GS}]
We revisit the example \cite[Example 4.3]{CS} introduced by A. Conca and J. Sidman.
$I_C=(x^3-yz^2,y^3-z^2t)$ defines an irreducible complete intersection curve $C$ of the arithmetic genus $\rho_a(C)=10$
in $\p^3$ with only one singular point $q=[0,0,0,1]$.
Note that this singular point is neither node nor ordinary cusp and $\delta_q = 10$.
We can compute the defining ideal of the normalization of a curve $C$ and delta invariant $\delta_q$ using \texttt {Singular}.
Furthermore, since $\dim \mathrm{Tan}_q (C)=3$, $\pi(q)$ is contained in the zero locus of $K_1 (I_C)$.
Thus we can not apply our results.
In fact, $\Gin(K_1 (I_C))$ is
\begin{align*}
(& y^4 ,y^3z^2 ,y^2z^5, yz^8, {\bf z^{15} },y^2z^4t,y^3zt^2,y^2z^3t^2,yz^7t^2,\\
 &y^3t^3,y^2z^2t^4,yz^6t^4,y^2zt^5,yz^5t^6,y^2t^7 ,yz^4t^8,yz^3t^{10} ).
\end{align*}
Therefore, $M(I_C)=1+M(K_1(I_C))=16$ which is not equal to $1+\binom{9-1}{2}-\rho_a(C)=1+28-10=19$.
\end{Ex}

\begin{Ex}[\texttt{Singular}~\cite{DGPS}, {\it Macaulay 2}~\cite{GS}]
Consider the ideal $I_C=( x^4-yz^3, y^2-zt) \subset k[x, y,z,t]$. This defines an irreducible complete intersection curve $C$ of $\rho_a(C)=10$ in $\p^3$ with one singular point $q=[0,0,0,1].$ The delta invariant $\delta_q$ is $9$ by \texttt{Singular}. Since $\dim \Tan_q(C)=2$, we can compute
by our formula,
$M(I_C)=1+\binom{8-1}{2}-9=13.$ In fact, $\Gin(I)$ is
$$
(x^2,xy^3 ,y^8, xy^2 z^2,xyz^5, {\bf xz^{12}} ,xy^2 zt^2,xyz^4 t^2,xy^2 t^4,xyz^3 t^4,xyz^2 t^6,xyzt^8,xyt^{10} )
$$
\end{Ex}

\begin{Ex}[\texttt{Singular}~\cite{DGPS}, {\it Macaulay 2}~\cite{GS}]
We consider the further example. Let $I_C=( x^3-yz^3, y^3-zt^2) \subset k[x, y,z,t]$ define an irreducible complete intersection curve $C$ in $\p^3$ with one singular point $q=[0,0,0,1].$ The dimension of a tangent space of $C$ at $q=[0,0,0,1]$ is $2$ and delta invariant $\delta_q = 6$. This singular curve has
$M(I_C)=19$ by computation of our formula. In particular, $\Gin(I_C)$ has a monomial generator ${\bf xz^{18}}$ having the maximal degree.
\end{Ex}

\begin{Ex}[{\it Macaulay 2}~\cite{GS}]
Let $C$ be a rational normal curve of degree 4 in $\p^4$ and $C_1$ be a projection curve in $\p^3$ with center $q \in \mathrm{Sec(C)} \setminus C$.
Then $C_1$ has one singular point as a node. Since the arithmetic genus of $C_1$ is $1$,
we know that
$$
d=4 \geq 3= 1+\binom{4-1}{2}-1
$$
Thus $M(I_{C_1})=4$ by our Theorem ~\ref{max2}. \\
In fact, we can compute the $\Gin(I_{C_1})$ with using {\it Macaulay 2}.
$$
\Gin(I_{C_1}) = (x_1^2,x_1x_2,{\bf x_2^4 },x_1x_3^2)
$$
\end{Ex}

\begin{Ex}[{\it Macaulay 2}~\cite{GS}]
Let $C$ be a rational normal curve in $\p^5$ and $C_1$ be a projection curve in $\p^4$ with center $q \in \mathrm{Sec(C)}\setminus C$.
Then $C_1$ has one singular point as a node. Consider a singular curve $C_2 $ in $\p^3$ as a generic projection of $C_1$.
In fact, $C_2$ is a singular curve of degree $5$, and the arithmetic genus $\rho_a(C_2)=1$ which is isomorphic to $C_1$.
Thus,
$$
M(I_{C_1})=M(I_{C_2})=1+\binom{d-1}{2}-\rho_a(C_2)=6.
$$
On the other hand, we can compute $\Gin(I_{C_2})$ using {\it Macaulay 2}.
$$
\Gin(I_{C_2}) = ( x_2^3,x_2^2x_3,x_2x_3^3,x_3^5,x_2^2x_4,x_2x_3x_4^2,{\bf x_2x_4^5}, x_2x_3x_4x_5,x_2x_3x_5^2 )
$$
\end{Ex}

\begin{Ex}[{\it Macaulay 2}~\cite{GS}]
Let $C$ be a rational normal curve in $\p^5$ and $C_1$ be a projection curve in $\p^4$ with center $q \in \mathrm{Sec(C)}\setminus C$.
Then $C_1$ has one singular point as a node. Let $q_1$ be a point in another secant line of $C$ and $\bar{q}_1 \in \p^4$ be an image of $q_1$
from the projection to $C_1$. Note that $\bar{q}_1$ is also a point of $\mathrm{Sec(C_1)} \setminus C_1$. If we project $C_1$ to $\p^3 $
from the  center $\bar{q}_1$
then we obtain the singular curve $C_2$ with the arithmetic genus $\rho_a(C_2)=2$ and ordinary two nodes.
Thus, by our formula,
$$
M(I_{C_2})=1+\binom{5-1}{2}-2=5.$$
Also, we can compute the generic initial of $I_{C_2}$ using {\it Macaulay 2}.
$$
\Gin(I_{C_2})=(x_2^2,x_2x_3^2,x_3^5,x_2x_3x_4, {\bf x_2x_4^4},x_2x_3x_5^2).
$$
\end{Ex}


\begin{remk}
Let $X$ be an irreducible reduced variety of codimension two. It is still open to compute or estimate $M(I_X)$
for $\dim (X)\ge 2$ (cf. \cite{AKS}). However, if $X$ is smooth or has a mild singularities, then it is expected that $M(I_X)$
is determined by the degree complexity of the double point locus under a generic projection.
Thus, by the induction on the dimension of hyperplane sections,
we expect asymptotically that
$$M(I_X) \thicksim 2(\frac{d}{2})^{2^n}.$$
\end{remk}

\bibliographystyle{amsalpha}

\end{document}